\numberwithin{equation}{section}
\newtheorem{theorem}{Theorem}[section]
\newtheorem{lemma}[theorem]{Lemma}
\newtheorem{proposition}[theorem]{Proposition}
\theoremstyle{definition}
\newtheorem{definition}[theorem]{Definition}
\newtheorem{remark}[theorem]{Remark}
\theoremstyle{remark}
\newcommand{\R}{\mathbb{R}}
\newcommand{\C}{\mathbb{C}}
\newcommand{\Z}{\mathbb{Z}}
\newcommand{\A}{\mathcal{A}}
\renewcommand{\P}{\mathbb{P}}
\newcommand{\eps}{\varepsilon}
\newcommand{\wh}{\widehat}
\newcommand{\op}[1]{{\left\vert\kern-0.25ex\left\vert\kern-0.25ex\left\vert #1 
    \right\vert\kern-0.25ex\right\vert\kern-0.25ex\right\vert}}
\newcommand{\qtq}[1]{\quad\text{#1}\quad}
\newcounter{smalllist}
\begin{document}
\title[Mass-subcritical NLS]{Random data final-state problem for the mass-subcritical NLS in $L^2$}
\author{Jason Murphy}
\address{Department of Mathematics, University of California, Berkeley}
\email{murphy@math.berkeley.edu}

\begin{abstract} We study the final-state problem for the mass-subcritical NLS above the Strauss exponent. For $u_+\in L^2$, we perform a physical-space randomization, yielding random final states $u_+^\omega\in L^2$. We show that for almost every $\omega$, there exists a unique, global solution to NLS that scatters to $u_+^\omega$.  This complements the deterministic result of Nakanishi \cite{Nakanishi}, which proved the existence (but not necessarily uniqueness) of solutions scattering to prescribed $L^2$ final states.
\end{abstract}

\maketitle

\section{Introduction}

We consider power-type nonlinear Schr\"odinger equations (NLS) of the form
\begin{equation}\label{nls}
(i\partial_t + \Delta) u = \mu |u|^p u,
\end{equation}
where $u:\R_t\times\R_x^d\to\C$, $p>0$, and $\mu\in\{\pm1\}$.  The case $\mu=1$ is the {defocusing} case, while $\mu=-1$ gives the {focusing} case.

The scaling symmetry 
\begin{equation}\label{scaling}
u(t,x)\mapsto \lambda^{\frac{2}{p}} u(\lambda^2 t,\lambda x)
\end{equation}
defines a notion of \emph{criticality} for \eqref{nls}.  In particular, a space of initial data is critical if its norm is invariant under this rescaling. The \emph{mass-critical NLS} refers to the case $p=\frac4d$.  In this case the conserved \emph{mass} of solutions
\[
M(u(t)):=\int_{\R^d} |u(t,x)|^2 \,dx 
\]
is invariant under rescaling.

We consider the \emph{mass-subcritical} regime, i.e. $0<p<\frac{4}{d}$.  In this case, any initial data $u_0\in L^2$ leads to a local-in-time solution $u$.  As a consequence of mass-subcriticality, the time of existence depends only on the $L^2$-norm of $u_0$.  In particular, by the conservation of mass, solutions are automatically global-in-time.  For further details, we refer the reader to the textbook \cite{Cazenave}.

 The long-time behavior of solutions for the mass-subcritical NLS is a rich and interesting subject.  We briefly review some relevant results, noting that our discussion is far from exhaustive. 

\subsection*{The initial-value problem.} Many results have been established for the initial-value problem when one selects data from the weighted space $\Sigma$, which is defined via the norm
\[
\|f\|_\Sigma^2 = \|f\|_{H^1}^2 + \|xf\|_{L^2}^2. 
\]
Tsutsumi and Yajima \cite{TsutsumiYajima} established that in the defocusing case, for $\tfrac{2}{d}<p<\tfrac4d$ and $u_0\in\Sigma$, the solution $u$ to \eqref{nls} with data $u_0$ \emph{scatters} in $L^2$, that is, there exists $u_+\in L^2$ such that 
\begin{equation}\label{L2-scatter}
\lim_{t\to\infty} \|u(t)-e^{it\Delta} u_+\|_{L^2} = 0.
\end{equation}
This result is sharp in the following sense: for $0<p\leq \tfrac2d$, any solution to \eqref{nls} that satisfies \eqref{L2-scatter} must be identically zero \cite{Barab}.

Restricting still to the defocusing case, Cazenave and Weissler \cite{CazenaveWeissler} proved that for a smaller range of $p$, the solution scatters in $\Sigma$, that is,
\begin{equation}\label{Sigma-scatter}
\lim_{t\to\infty} \|e^{-it\Delta}u(t) - u_+\|_\Sigma = 0.
\end{equation}
In particular, \eqref{Sigma-scatter} holds for all $p>\frac{4}{d+2}$ in the small-data regime, while for arbitrary data the result is restricted to $p\in[p_0(d),\tfrac{4}{d})$, where $p_0(d)$ is the \emph{Strauss exponent} defined by
\begin{equation}\label{Strauss}
p_0(d) = \tfrac{2-d+\sqrt{d^2+12d+4}}{2d}.
\end{equation}
The question of scattering in $\Sigma$ for arbitrary data for $p\in(\tfrac{4}{d+2},p_0(d))$ remains open. 

For the scattering results of \cite{CazenaveWeissler, TsutsumiYajima}, an important role is played by the \emph{pseudoconformal energy estimate}, which implies decay for the potential energy $\|u(t)\|_{L^{p+2}}$ that matches the rate for linear solutions.  For $p>p_0(d)$, this estimate implies critical space-time bounds for solutions that can be used to deduce scattering. 

\subsection*{Existence of wave operators} A counterpart to the initial-value problem is the \emph{final-state problem}: given $u_+$, can one find a (unique) solution $u$ that scatters to $u_+$?  If one can do this, one calls the map $u_+\mapsto u(0)$ the \emph{wave operator}.  For $p\in(\tfrac{4}{d+2},\tfrac{4}{d})$, one has the existence of wave operators in the $\Sigma$ topology \cite{CazenaveWeissler}.  In fact, one can prove results in critical weighted spaces essentially all the way down to $p>\frac2d$ \cite{Nakanishi}.

The result that is most relevant to this note is due to Nakanishi \cite{Nakanishi}.  He showed that in dimensions $d\geq 3$, for any $\tfrac{2}{d}<p<\tfrac{4}{d}$ and any $u_+\in L^2$ there exists a solution $u$ to \eqref{nls} satisfying \eqref{L2-scatter}.  However, his arguments did not yield \emph{uniqueness} of the solution, and thus one cannot conclude existence of wave operators in $L^2$.

Nonetheless, Nakanishi's result is quite remarkable, in the sense that working with merely $L^2$ final states makes the final-state problem essentially a \emph{supercritical} problem.  Indeed, the pseudoconformal transformation 
\[
u(t,x) = (2it)^{-\frac{d}{2}}e^{\frac{i|x|^2}{4t}} \bar{v}(\tfrac{1}{2t},\tfrac{x}{2t})
\]
transforms the final-state problem for \eqref{nls} into the initial-value problem
\begin{equation}\label{pcnls}
\begin{cases}
(i\partial_t+\Delta)v = \mu t^{\frac{dp}{2}-2}|v|^p v, \\
v(0) = \overline{\widehat{u_+}}\in L^2.
\end{cases}
\end{equation}
The scaling symmetry for \eqref{pcnls}, namely, $v(t,x)\mapsto \lambda^{d-\frac{2}{p}} v(\lambda^2 t, \lambda x)$ identifies \eqref{pcnls} as an \emph{$L^2$-supercritical} problem for $p<\frac{4}{d}$.  

\subsection*{Main Result} Inspired by recent works concerning almost sure well-posedness for supercritical problems (see e.g. 
\cite{BOP, Bourgain1, Bourgain2, BTT, BT, BT2, CO, Deng, LM, NORS, PRT, Tz} and the references therein), we consider the question of the `almost sure existence of wave operators' for the mass-subcritical NLS in the $L^2$-topology.  In particular, we will prove existence and uniqueness of solutions scattering to suitably randomized prescribed final states in $L^2$.  For the probabilistic aspects of this note, we largely follow the presentation of \cite{LM}. 

\begin{definition}[Randomization]\label{randomization} We perform a randomization in physical space.  This is similar to the randomization appearing in \cite{LM}, although there the randomization is in frequency space. 

 Let $\phi\in C_c^\infty(\R^d)$ be a smooth bump function such that $\phi=1$ for $|x|\leq 1$ and $\phi=0$ for $|x|>2$.  For $k\in\Z^d$, we set $\phi_k(x)=\phi(x-k)$ and define the partition of unity $\{\psi_k\}$ by
\[
\psi_k(x) = \frac{\phi_k(x)}{\sum_{\ell\in\Z^d} \phi_\ell(x)}. 
\]

Next, let $\{g_k\}_{k\in\Z^d}$ be a sequence of independent, mean-zero random variables on a probability space $(\Omega,\A, \P)$ with distributions $\mu_k$.  We assume that there exists $c>0$ so that
\begin{equation}\label{prob-condition}
\biggl| \int_\R e^{\gamma x}d\mu_k(x)\biggr| \leq e^{c\gamma^2}\qtq{for all}\gamma\in\R
\end{equation}
and all $k\in\Z^d$.  For simplicity, one can keep in mind the example of Gaussian random variables. 

For $f\in L^2$, we define the \emph{randomization} of $f$ by
\[
f^\omega(x) = \sum_{k\in\Z^d} g_k(\omega)\psi_k(x) f(x),
\]
which we understand as a limit in $L^2(\Omega;L_x^2(\R^d))$ of sums over $|k|\leq n$ as $n\to\infty$. 
\end{definition}

\begin{remark}\label{remark}This randomization procedure does not imply additional decay for $f^\omega$ in the sense of weighted bounds.  In particular, if there exists $c>0$ such that
\begin{equation}\nonumber%\label{prob-condition2}
\sup_{k\in\Z^d}\mu_k([-c,c])<1,
\end{equation}
then the following holds for any $\eps>0$: if $|x|^\eps f\notin L^2$, then $|x|^\eps f^\omega\not\in L^2$ almost surely.  To prove this, one can mimic the proof of \cite[Lemma~B.1]{BT}, relying on the fact that
\begin{equation}\label{equiv}
\| |x|^\eps f\|_{L_x^2}^2 \sim \sum_{k\in\Z^d} \| |x|^\eps \psi_k f\|_{L_x^2}^2. 
\end{equation}
On the other hand, one can show that $\wh{f^\omega}$ almost surely enjoys additional regularity in the sense of higher $L^r$-norms.  See Section~\ref{S:remarks} for a further discussion. 
\end{remark}

The result of this note is the following `almost sure existence of wave operators' above the Strauss exponent. 

\begin{theorem}\label{T} Let $d\geq 1$, $\mu\in\{\pm1\}$, and 
\begin{equation}\label{p}
p_0(d) < p < \tfrac{4}{d},
\end{equation}
with $p_0(d)$ as in \eqref{Strauss}. Let $u_+\in L^2$ and define the randomization $u_+^\omega$ as in Definition~\ref{randomization}.  For almost every $\omega$, there exists a unique global solution $u$ to \eqref{nls} that scatters to $u_+^\omega$ in $L^2$, that is, 
\[
\lim_{t\to\infty} \|u(t) - e^{it\Delta}u_+^\omega\|_{L^2} = 0. 
\]
\end{theorem}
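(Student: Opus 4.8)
The plan is to solve the final-state problem directly in Duhamel form, peeling off the free evolution of the random data as a source term governed by probabilistic Strichartz estimates and solving for the nonlinear remainder by a contraction. Setting $F(t):=e^{it\Delta}u_+^\omega$ and seeking $u=F+w$, the demand that $u$ solve \eqref{nls} and scatter to $u_+^\omega$ as in \eqref{L2-scatter} is equivalent to the fixed-point equation
\[
w(t)=\Phi(w)(t):=i\mu\int_t^\infty e^{i(t-s)\Delta}\bigl(|F+w|^p(F+w)\bigr)(s)\,ds,
\]
the integration from $+\infty$ encoding the scattering condition. The natural spatial exponent is $r=p+2$, for which $\bigl\||h|^ph\bigr\|_{L_x^{r'}}=\|h\|_{L_x^{p+2}}^{p+1}$, and I would run the contraction on a time interval $[T,\infty)$ with $T=T(\omega)$ large.

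First I would establish the probabilistic input. For data supported in a single unit cube one has the dispersive bound $\|e^{it\Delta}\psi_k u_+\|_{L_x^{p+2}}\lesssim \langle t\rangle^{-2/q}\|\psi_k u_+\|_{L^2}$, where $(q,p+2)$ is the $L^2$-admissible pair $\tfrac2q+\tfrac{d}{p+2}=\tfrac d2$; the key point is that compact support converts the $L^1$ norm in the dispersive estimate into an $L^2$ norm. Summing the randomized pieces by Khintchine's inequality---legitimate under the subgaussian hypothesis \eqref{prob-condition}---together with the local-in-time Strichartz estimate, yields, for almost every $\omega$, space-time bounds for $F=e^{it\Delta}u_+^\omega$ that reach below the deterministic admissibility threshold in the time exponent, i.e.\ control of $\|F\|_{L_t^aL_x^{p+2}(\R)}$ for time-exponents $a<q$ unavailable for generic $L^2$ data. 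This is exactly the mechanism behind the extra $L^r$-integrability of $\widehat{u_+^\omega}$ recorded in Remark~\ref{remark}. A large-deviation estimate upgrades the resulting $L_\omega^\rho$-bounds to almost-sure finiteness over all of $\R$, whence $\|F\|_{L_t^aL_x^{p+2}([T,\infty))}\to0$ as $T\to\infty$ for a.e.\ $\omega$.

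Next I would close the contraction. Applying inhomogeneous (retarded) Strichartz estimates to $\Phi$---allowing, if needed, non-dual and non-admissible pairs in the spirit of the Kato--Foschi estimates---and expanding the nonlinearity into its purely free term $|F|^pF$, its purely remainder term $|w|^pw$, and the mixed terms, I would bound $\|\Phi(w)\|_{X([T,\infty))}$ and $\|\Phi(w)-\Phi(w')\|_{X([T,\infty))}$ in a carefully chosen resolution space $X$ by H\"older in space-time. Each nonlinear term carries a factor of the form $\|\cdot\|_{L_t^{(p+1)a'}L_x^{p+2}}^{p+1}$, so the space $X$ must simultaneously be producible by the retarded estimate and supply enough integrability in time to absorb the $(p+1)$-fold product; it is precisely the balancing of these two requirements that forces $p>p_0(d)$, exactly as the Strauss exponent \eqref{Strauss} governs the analogous deterministic arguments of \cite{CazenaveWeissler, Nakanishi}. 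For such $p$, taking $T=T(\omega)$ large and the ball small, the free source term is small by the tail bound of the previous step and $\Phi$ is a contraction, producing a unique $w\in X([T,\infty))$; since $\|w(t)\|_{L^2}$ is dominated by the dual-norm tail, $w(t)\to0$ in $L^2$ and $u=F+w$ scatters to $u_+^\omega$.

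Finally I would globalize and conclude uniqueness. The solution lies in $C_tL_x^2$, so by the $L^2$-subcritical local theory recalled in the introduction---whose existence time depends only on the $L^2$-norm---together with conservation of mass, $u$ extends uniquely to a global solution of \eqref{nls} on $\R$. For uniqueness in the scattering class, any global $L^2$ solution scattering to $u_+^\omega$ has remainder $w$ solving the same fixed-point equation and, by Strichartz and \eqref{L2-scatter}, small $X$-norm on some $[T',\infty)$; hence it agrees with the constructed solution there and, by the deterministic backward $L^2$-uniqueness of \cite{Cazenave}, on all of $\R$. I expect the contraction step to be the main obstacle: because the data is only in $L^2$, the deterministic estimates fail by a hair, and one must match---with no slack---the probabilistically gained time-integrability against what the $(p+1)$-power nonlinearity demands in the retarded Strichartz framework, which is the origin of the restriction $p>p_0(d)$.
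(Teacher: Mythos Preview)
Your plan is correct and matches the paper's argument in all essentials: the probabilistic gain comes exactly from the dispersive estimate on the compactly supported pieces $\psi_k u_+$ combined with Khintchine/large-deviation, yielding control of $e^{it\Delta}u_+^\omega$ in the critical space $L_t^q L_x^{p+2}((T,\infty))$ (with $s(q,p+2)=s_c$) and decay in $T$; the contraction then closes via an exotic inhomogeneous Strichartz estimate of Kato--Foschi type together with a standard admissible pair, and globalization follows from mass conservation and the subcritical local theory. The Strauss threshold $p>p_0(d)$ enters precisely where you say---both as the integrability condition $\eps_0:=(\tfrac d2-\tfrac d{p+2})-\tfrac1q>0$ in the probabilistic step and as $\bar q<\infty$ in the inhomogeneous estimate.

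One simplification the paper makes relative to your outline: because the randomized free evolution already lands in the \emph{same} critical space $L_t^qL_x^{p+2}$ used for the nonlinear iterate, there is no need for the Da~Prato--Debussche/Bourgain decomposition $u=F+w$ and the attendant trilinear expansion of $|F+w|^p(F+w)$. One can run the contraction directly on $u$ via $\Phi u=e^{it\Delta}u_+^\omega+i\mu\int_t^\infty e^{i(t-s)\Delta}|u|^pu\,ds$ in a ball of the form $\{\|u\|_{L_t^qL_x^{p+2}}\le C\eta,\ \|u\|_{C_tL_x^2\cap L_t^aL_x^b}\le 2C\|u_+^\omega\|_{L^2}\}$, which avoids the bookkeeping of mixed terms. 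This is a cosmetic rather than substantive difference; your version would work equally well.
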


\begin{remark}\label{R:uniqueness} Like Nakanishi's result in \cite{Nakanishi}, Theorem~\ref{T} is valid in both the focusing and defocusing settings. Compared to his result, the novelty here is in the uniqueness of the solutions constructed (while still working at the level of $L^2$).  In addition, our result is valid in dimensions $d\in\{1,2\}$.  Note, however, that we were unable treat the whole range $\tfrac{2}{d}<p<\tfrac{4}{d}$.  
\end{remark}

\begin{remark} Uniqueness holds in the class of solutions in $C_t L_x^2$ that belong to $L_t^q L_x^r((T,\infty)\times\R^d)$ for some $T$, where $(q,r)$ is a certain `critical' exponent pair (see Section~\ref{S:FS}). 
\end{remark}

As in previous works treating supercritical problems via probabilistic techniques, an essential ingredient is the fact that solutions to the \emph{linear} Schr\"odinger equation with randomized initial data obey improved space-time estimates almost surely (see Proposition~\ref{prop:linear}).  Combining this fact with an `exotic' inhomogeneous Strichartz estimate, we are able to find a suitable space in which to run a simple contraction mapping argument (see Proposition~\ref{prop:wo}).  In fact, we are able to work with the space-time norms that one can access with the pseudoconformal energy estimate\footnote{This strategy for proving existence of wave operators appears already in the work of Kato \cite{Kato}.}.  We failed to find suitable spaces precisely when $p$ reaches the Strauss exponent; treating the range $\tfrac{2}{d}<p\leq \alpha_0(d)$ is an interesting open problem.

Whether or not one has deterministic existence of wave operators in $L^2$ (i.e. uniqueness in the result of Nakanishi \cite{Nakanishi}) remains an open question.  It is also natural to consider ill-posedness results in this setting, given the supercritical nature of the problem. 

By now there are many works addressing almost sure well-posedness in supercritical settings via probabilistic techniques.  There has also been some progress on almost sure \emph{scattering} in supercritical settings (e.g. \cite{BTT, DLM, PRT}).  The paper \cite{DLM} considers the defocusing energy-critical wave equation and uses a frequency space randomization.  The papers \cite{BTT, PRT} prove probabilistic global well-posedness for the NLS with a harmonic potential, using a randomization based on the Hermite functions.  By applying the lens transform, they deduce scattering results for the usual NLS.  In the mass-subcritical setting, they establish scattering with non-zero probability (for $L^2$ initial data) in dimensions $d\geq 2$.

The present work considers the final-state problem, which is essentially a local problem with data prescribed at $t=\infty$.  By employing a physical-space randomization, we are able to prove a scattering result for $L^2$ data that holds almost surely.  Furthermore, we are able to treat all dimensions $d\geq 1$, while the deterministic result of Nakanishi \cite{Nakanishi} only holds for $d\geq 3$ (with an extension to $H^1$ final states for $d=2$ \cite{HT}).

%%%%%%%%%%%%%%%%
\subsection*{Outline of the paper} In Section~\ref{S:notation}, we introduce notation, collect estimates related to the linear Schr\"odinger equation, and introduce the requisite probabilistic results.  In Section~\ref{S:proof}, we prove Theorem~\ref{T}.  The main ingredients are improved space-time bounds for the linear Schr\"odinger equation with randomized data (Proposition~\ref{prop:linear}) and a deterministic existence of wave operators result (Proposition~\ref{prop:wo}).  In Section~\ref{S:remarks}, we conclude with some final remarks and prove almost sure additional `Fourier--Lebesgue regularity' for randomized final states (Proposition~\ref{flr}). 

%%%%%%%%%%%%%%%%
\subsection*{Acknlowedgements} The author was supported by the NSF Postdoctoral Fellowship DMS-1400706.  I am grateful to K. Nakanishi, who pointed out that the arguments presented here could extend to dimensions $d\in\{1,2\}$. 

%%%%%%%%%%%%%%%%
%%%%%%%%%%%%%%%%
%%%%%%%%%%%%%%%%
\section{Notation and useful lemmas}\label{S:notation}
We write $A\lesssim B$ to denote $A\leq CB$ for some $C>0$. We write $A\sim B$ to denote $A\lesssim B\lesssim A$.  We write $A\lesssim_\rho B$ to mean $A\leq CB$ for some $C=C(\rho)>0$.  We use the notation $L_t^q L_x^r(I\times\R^d)$ to denote space-time norms
\[
\|u\|_{L_t^q L_x^r(I\times\R^d)} = \biggl(\int_I\biggl(\int_{\R^d} |u(t,x)|^r\,dx\biggr)^{\frac{q}{r}}\,dt\biggr)^{\frac{1}{q}},
\]
with the usual adjustments when $q$ or $r$ is infinite.  

We define the \emph{scaling} associated to an exponent pair $(q,r)$ by
\[
s(q,r) = \tfrac{d}{2}-(\tfrac{2}{q}+\tfrac{d}{r}). 
\]
A space $L_t^q L_x^r$ is \emph{critical} for \eqref{nls} if $s(q,r)=s_c:=\tfrac{d}{2}-\tfrac{2}{p}$.  In this case, the $L_t^q L_x^r$-norm is invariant under the rescaling \eqref{scaling}. 

For an exponent $r\in[1,\infty]$, we write $r'\in[1,\infty]$ to denote the dual exponent, i.e. the solution to $\tfrac1r+\tfrac1{r'}=1$.

We write $\wh{f}$ for the Fourier transform of a function $f$.  

%%%%%%%%%%%%%%%%
\subsection{The linear Schr\"odinger equation} Solutions to the linear Schr\"odinger equation are generated by $e^{it\Delta}$, where
\begin{equation}\label{propagator}
[e^{it\Delta}f](x) = (4\pi it)^{-\frac{d}{2}}\int_{\R^d} e^{i|x-y|^2/4t}f(y)\,dy. 
\end{equation}
Using this identity together with unitarity on $L^2$, one immediately deduces the following dispersive estimate by interpolation:
\begin{equation}\label{dispersive}
\| e^{it\Delta}\|_{L_x^{r'}\to L_x^r} \lesssim |t|^{-(\frac{d}{2}-\frac{d}{r})}\qtq{for} 2\leq r\leq \infty.
\end{equation}

Solutions to the nonlinear equation \eqref{nls} satisfy the \emph{Duhamel formula}
\[
u(t) = e^{i(t-t_0)\Delta}u(t_0) - i\mu \int_{t_0}^t e^{i(t-s)\Delta}\bigl(|u|^pu \bigr)(s)\,ds
\]
for any $t_0,t\in I$. 

We recall the standard Strichartz estimates.  We call a pair $(a,b)$ \emph{admissible} if $s(a,b)=0$ and $2\leq a\leq\infty$.  One also excludes the triple $(d,a,b)=(2,2,\infty)$ and requires $4\leq a\leq\infty$ in dimension $d=1$.  We call $(\alpha,\beta)$ \emph{dual admissible} if $(\alpha',\beta')$ is admissible.

\begin{proposition}[Strichartz estimates, \cite{GV, KT, Strichartz}]\label{prop:strichartz} Let $I$ be a time interval and $t_0\in \bar{I}$.  Then for any admissible $(a,b)$ and any dual admissible $(\alpha,\beta)$, we have
\begin{align}
\nonumber\|e^{it\Delta}\varphi\|_{C_t L_x^2 \cap L_t^a L_x^b(I\times\R^d)}& \lesssim \|\varphi\|_{L_x^2}, \\
\label{inhomogeneous}\biggl\| \int_{t_0}^t e^{i(t-s)\Delta}F(s)\,ds \biggr\|_{C_t L_x^2 \cap L_t^a L_x^b(I\times\R^d)}& \lesssim \|F\|_{L_t^\alpha L_x^\beta(I\times\R^d)}. 
\end{align}
\end{proposition}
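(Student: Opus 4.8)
The plan is to deduce both inequalities from the $TT^*$ method, taking as the only analytic inputs the dispersive bound~\eqref{dispersive} and the $L_x^2$-unitarity of $e^{it\Delta}$. Writing $T\varphi(t)=e^{it\Delta}\varphi$, the space-time part of the homogeneous estimate, $\|T\varphi\|_{L_t^aL_x^b}\lesssim\|\varphi\|_{L_x^2}$, is equivalent by duality to the boundedness of the adjoint $T^*F=\int_I e^{-is\Delta}F(s)\,ds$ from $L_t^{a'}L_x^{b'}$ to $L_x^2$, and hence to the boundedness of the self-adjoint composition $TT^*F(t)=\int_I e^{i(t-s)\Delta}F(s)\,ds$ from $L_t^{a'}L_x^{b'}$ to $L_t^aL_x^b$. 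The $C_tL_x^2$ portion of the homogeneous estimate needs no argument beyond unitarity, so the whole homogeneous statement reduces to estimating the single \emph{untruncated} operator $TT^*$.

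For the non-endpoint range $a>2$ I would argue by fractional integration. The dispersive estimate~\eqref{dispersive} with exponent $r=b$ gives the pointwise-in-time bound $\|e^{i(t-s)\Delta}F(s)\|_{L_x^b}\lesssim|t-s|^{-(\frac d2-\frac db)}\|F(s)\|_{L_x^{b'}}$, and admissibility $s(a,b)=0$ rewrites the exponent as $|t-s|^{-\frac2a}$. Inserting the $L_x^b$-norm under the $s$-integral (Minkowski) and then applying the one-dimensional Hardy--Littlewood--Sobolev inequality in $t$ yields $\|TT^*F\|_{L_t^aL_x^b}\lesssim\|F\|_{L_t^{a'}L_x^{b'}}$; the HLS step is available exactly when the kernel decay obeys $\frac2a<1$, i.e.\ $a>2$, which is why the endpoint must be excluded from this argument.

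The main obstacle is the endpoint $a=2$, available only when $d\geq3$, where the Hardy--Littlewood--Sobolev estimate degenerates into its forbidden borderline case. Here I would follow Keel--Tao~\cite{KT}: decompose $TT^*$ dyadically according to $|t-s|\sim2^j$, bound each dyadic piece as a bilinear form using the dispersive bound~\eqref{dispersive}, and then sum the pieces by the real-interpolation (Lorentz-space) argument that exploits the slack in the off-diagonal exponents. This recovers the endpoint homogeneous estimate. The excluded triple $(d,a,b)=(2,2,\infty)$ and the restriction $a\geq4$ in dimension $d=1$ mark precisely the pairs for which this bilinear summation fails.

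With the homogeneous estimate in hand for every admissible pair, the inhomogeneous estimate follows cleanly. First, the untruncated operator with two possibly different pairs factors as $\int_I e^{i(t-s)\Delta}F(s)\,ds=e^{it\Delta}\bigl(\int_I e^{-is\Delta}F(s)\,ds\bigr)$; applying the dual homogeneous estimate for the admissible pair $(\alpha',\beta')$ to the inner integral and the homogeneous estimate for $(a,b)$ to $e^{it\Delta}$ gives boundedness from $L_t^\alpha L_x^\beta$ to $L_t^aL_x^b$. To replace the untruncated integral by the retarded one $\int_{t_0}^t$ appearing in~\eqref{inhomogeneous}, I would invoke the Christ--Kiselev lemma, which upgrades boundedness of the full operator to boundedness of its time-ordered truncation whenever the temporal input exponent is strictly smaller than the output exponent, i.e.\ $\alpha<a$; since $(\alpha,\beta)$ is dual admissible we always have $\alpha\leq2\leq a$, so this holds off the double endpoint $\alpha=a=2$, which is instead covered directly by the Keel--Tao bilinear estimate, as that argument is insensitive to the time ordering. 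Finally, the $C_tL_x^2$ bound for the inhomogeneous term follows by writing $\bigl\|\int_{t_0}^t e^{i(t-s)\Delta}F(s)\,ds\bigr\|_{L_x^2}$ via duality and applying the homogeneous estimate for the admissible pair $(\alpha',\beta')$.
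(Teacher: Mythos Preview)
The paper does not prove this proposition; it is simply stated with citations to \cite{GV, KT, Strichartz} and used as a black box. Your sketch is the standard route and is correct: the $TT^*$ argument combined with Hardy--Littlewood--Sobolev for the non-endpoint range $a>2$, the Keel--Tao bilinear/atomic decomposition at the endpoint $a=2$ in $d\geq3$, and the Christ--Kiselev lemma (or the Keel--Tao argument directly at the double endpoint) to pass from the untruncated to the retarded operator.

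One small inaccuracy: the constraint $a\geq4$ in dimension $d=1$ is not a failure of the bilinear summation. In $d=1$ the scaling relation $s(a,b)=0$ reads $\tfrac{2}{a}+\tfrac{1}{b}=\tfrac{1}{2}$, so $b\leq\infty$ already forces $a\geq4$; there is no admissible pair with $a=2$ to begin with, and the non-endpoint Hardy--Littlewood--Sobolev argument covers the entire admissible range $4\leq a\leq\infty$ there. Otherwise your outline is sound.
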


The estimate \eqref{inhomogeneous} holds for more exponents than the ones given in Proposition~\ref{prop:strichartz}.  The sharp range of `exotic' inhomogeneous estimates was studied by Foschi \cite{Foschi}.  We record here one particular estimate that can be proven simply (see e.g. \cite[Chapter 2.4]{Cazenave}), which will be essential to our arguments below. 

\begin{proposition}[Inhomogeneous estimate]\label{prop:foschi} Let $0<p<\tfrac{4}{d-2}$ if $d\geq 3$ and $0<p<\infty$ if $d\in\{1,2\}$.  Let $I$ be a time interval and $t_0\in\bar{I}$. Let $r=p+2$ and suppose $1<q,\bar q<\infty$ satisfy
\begin{equation}\label{foschi-scale}
\tfrac{1}{q}+\tfrac{1}{\bar q} = \tfrac{d}{2}-\tfrac{d}{r}.
\end{equation}
Then
\[
\biggl\| \int_{t_0}^t e^{i(t-s)\Delta}F(s)\,ds\biggr\|_{L_t^q L_x^r(I\times\R^d)} \lesssim \|F\|_{L_t^{\bar q'} L_x^{r'}(I\times\R^d)}. 
\]
\end{proposition}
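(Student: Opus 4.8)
The plan is to reduce the estimate to the one-dimensional Hardy--Littlewood--Sobolev inequality in the time variable, using the pointwise-in-time dispersive bound \eqref{dispersive} in place of any oscillation. Write $\sigma := \tfrac{d}{2}-\tfrac{d}{r}$, so that by \eqref{foschi-scale} the scaling condition reads $\tfrac1q+\tfrac1{\bar q}=\sigma$. The first observation is that under the stated hypotheses one has $0<\sigma<1$ in every dimension: indeed $r=p+2>2$ forces $\sigma>0$, while the finiteness of $r$---sharpened in dimensions $d\geq3$ by the constraint $p<\tfrac{4}{d-2}$---keeps $\tfrac{d}{r}>\tfrac{d}{2}-1$, i.e. $\sigma<1$.

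First I would apply Minkowski's integral inequality to move the $L_x^r$ norm inside the time integral, and then invoke the dispersive estimate \eqref{dispersive} slice-by-slice in $s$. Since $r\geq2$, for each fixed $t$ this gives the pointwise-in-time bound
\[
\Bigl\| \int_{t_0}^t e^{i(t-s)\Delta}F(s)\,ds\Bigr\|_{L_x^r}
\leq \int_{t_0}^t \bigl\| e^{i(t-s)\Delta}F(s)\bigr\|_{L_x^r}\,ds
\lesssim \int_I |t-s|^{-\sigma}\,\|F(s)\|_{L_x^{r'}}\,ds,
\]
where in the last step I enlarged the domain of integration from the interval between $t_0$ and $t$ to all of $I$, which is harmless because the integrand is nonnegative. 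Setting $g(s):=\|F(s)\|_{L_x^{r'}}$ and extending $F$ by zero outside $I$, the right-hand side is the Riesz potential of order $\sigma$ applied to $g$ on $\R$.

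It then remains to take the $L_t^q(I)$ norm and apply the one-dimensional Hardy--Littlewood--Sobolev inequality, which yields a bound by $\|g\|_{L_t^{\bar q'}}=\|F\|_{L_t^{\bar q'}L_x^{r'}}$ precisely when $0<\sigma<1$, $1<\bar q'<q<\infty$, and $\tfrac1q = \tfrac{1}{\bar q'}-(1-\sigma)$. The last identity is equivalent to the scaling hypothesis $\tfrac1q+\tfrac1{\bar q}=\sigma$; the condition $\bar q'>1$ is just $\bar q<\infty$; and the requirement $\bar q'<q$ is automatic, since it is equivalent to $\sigma=\tfrac1q+\tfrac1{\bar q}<1$, which we have already verified. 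This completes the argument.

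There is no genuine obstacle here---this is the standard elementary route to such ``exotic'' inhomogeneous estimates (cf. \cite[Chapter~2.4]{Cazenave}), and the only points requiring care are the bookkeeping that verifies $\sigma\in(0,1)$ and that the HLS exponent pair $(\bar q',q)$ is admissible. The one feature worth flagging is that this proof deliberately discards the dispersive oscillation and uses only the absolute-value decay $|t-s|^{-\sigma}$; this is exactly why it produces a larger range of admissible pairs $(q,\bar q)$ than the classical inhomogeneous Strichartz estimate \eqref{inhomogeneous}, at the cost of being tied to the single spatial exponent $r=p+2$ (and its dual $r'$).
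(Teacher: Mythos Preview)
Your argument is correct and matches the paper's indicated route exactly: the paper does not write out a detailed proof but simply states that the estimate follows from the dispersive bound \eqref{dispersive} together with the Hardy--Littlewood--Sobolev inequality, which is precisely what you carry out. Your verification that $\sigma=\tfrac{d}{2}-\tfrac{d}{r}\in(0,1)$ under the stated hypotheses and that the HLS exponent pair $(\bar q',q)$ is admissible is the only bookkeeping needed, and it is done correctly.
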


To prove this result, one uses \eqref{dispersive} and the Hardy--Littlewood--Sobolev inequality. 
%%%%%%%%%%%%%%%%
\subsection{Function spaces}\label{S:FS}  We now introduce the specific exponents that we will use in the estimates below.  We first define
\[
r = p+2, \quad q = \tfrac{2p(p+2)}{4-p(d-2)}, \quad \bar q = \tfrac{2p(p+2)}{dp^2-(4-p(d-2))}.
\]
Then one can check $s(q,r)=s_c$ and that $(r,q,\bar q)$ satisfy the scaling relation \eqref{foschi-scale}. 

It is easy to verify that
\[
\max\{1,p\} < q < \infty
\]
for $p$ satisfying \eqref{p}; in fact this holds for a wider range of $p$ than the one appearing in \eqref{p}, for example $\tfrac{4}{d+2}<p<\tfrac{4}{d-2}$.  

One can also check that $\bar{q}>1$ for $p<\frac{4}{d-2}$, while $\bar q<\infty$ precisely when $p>p_0(d)$ (cf. \eqref{Strauss}).  This is one reason why we work above the Strauss exponent (see also Proposition~\ref{prop:linear}). 

In particular, we can apply Proposition~\ref{prop:foschi} with $(r,q,\bar{q})$.  As one can check that 
\[
r'=\tfrac{r}{p+1}\qtq{and}\bar{q}' = \tfrac{q}{p+1}, 
\]
we get the nonlinear estimate
\begin{equation}\label{nle}
\biggl\| \int_{t_0}^t e^{i(t-s)\Delta}\bigl(|u|^p u\bigr)(s)\,ds\biggr\|_{L_t^q L_x^r} \lesssim \| |u|^p u\|_{L_t^{\bar q'} L_x^{r'}} \lesssim \|u\|_{L_t^q L_x^r}^{p+1}.
\end{equation}

We next define an admissible and dual admissible pair.  For $d\geq 2$, we first take $a$ satisfying
\begin{equation}\label{a-conditions}
\max\{\tfrac12-\tfrac{p}{q}, 0\} < \tfrac{1}{a} < \min\{1-\tfrac{p}{q}, \tfrac12\}
\end{equation}
and choose $b$ so that $s(a,b)=0$; in particular, $(a,b)$ is an admissible pair.  For $d=1$, we instead impose
\begin{equation}\tag{\ref{a-conditions}}
\max\{\tfrac34-\tfrac{p}{q},0\} < \tfrac1a < \min\{1-\tfrac{p}{q}, \tfrac14\}.
\end{equation}
We now define $(\alpha,\beta)$ via
\[
\tfrac{1}{\alpha} = \tfrac{p}{q}+\tfrac{1}{a}\qtq{and}\tfrac{1}{\beta} = \tfrac{p}{r} + \tfrac{1}{b}.
\]
Then \eqref{a-conditions} implies $1<\alpha< 2$ in dimensions $d\geq 2$ and $1<\alpha<\frac43$ in $d=1$.  The scaling relations $s(q,r)=s_c$ and $s(a,b)=0$ then guarantee that $(\alpha,\beta)$ is a dual admissible pair.

We have the following nonlinear estimate via Proposition~\ref{prop:strichartz}:
\begin{equation}\label{nle2}
\biggl\| \int_{t_0}^t e^{i(t-s)\Delta}\bigl(|u|^p u\bigr)(s)\,ds\biggr\|_{C_t L_x^2\cap L_t^a L_x^b} \lesssim \||u|^p u\|_{L_t^\alpha L_x^\beta}\lesssim \|u\|_{L_t^q L_x^r}^p \|u\|_{L_t^a L_x^b}. 
\end{equation}

%%%%%%%%%%%%%%%%
\subsection{Probabilistic results} We next import a few probabilistic results that will play a role in establishing improved space-time estimates for linear solutions with randomized data.  

The first result appears in \cite[Lemma~3.1]{BT}. 

\begin{proposition}[Large deviation estimate, \cite{BT}]\label{prop:ld} Let $\{\ell_k\}$ be a sequence of independent random variables with distributions $\{\mu_k\}$ on a probability space $(\Omega,\A,\P)$ satisfying  \eqref{prob-condition}. Then for $\alpha\geq 2$ and $\{c_k\}\in\ell^2$, 
\[
\biggl\| \sum_k c_k \ell_k(\omega)\biggr\|_{L_\omega^\alpha(\Omega)} \lesssim \sqrt{\alpha}\biggl( \sum_{k} |c_k|^2\biggr)^{\frac12}.
\]
\end{proposition}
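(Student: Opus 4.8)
The plan is to run the standard Chernoff/sub-Gaussian argument: control the exponential moments of the sum using independence together with the hypothesis \eqref{prob-condition}, convert this into a Gaussian tail bound, and then recover the $L^\alpha_\omega$ estimate via the distribution-function formula, reading off the $\sqrt\alpha$ dependence from Stirling's formula. First I would reduce to real coefficients and real-valued random variables. Writing $c_k=a_k+ib_k$, one has $\sum_k c_k\ell_k = \sum_k a_k\ell_k + i\sum_k b_k\ell_k$, so by the triangle inequality in $L^\alpha_\omega$ it suffices to bound real linear combinations $\sum_k a_k\ell_k$ (with $\sum_k a_k^2\le\sum_k|c_k|^2$, and similarly for the $b_k$). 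Thus I assume $c_k\in\R$ and set $\sigma^2=\sum_k c_k^2$.

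Next I would estimate the moment generating function. For $\gamma\in\R$, independence and \eqref{prob-condition} applied with parameter $\gamma c_k$ give
\[
\mathbb{E}\bigl[e^{\gamma\sum_k c_k\ell_k}\bigr] = \prod_k \int_\R e^{\gamma c_k x}\,d\mu_k(x) \le \prod_k e^{c\gamma^2 c_k^2} = e^{c\gamma^2\sigma^2},
\]
where one truncates to $|k|\le n$ and passes to the limit, the bound being uniform (each factor is a positive integral bounded above by the stated exponential). This sub-Gaussian control of the MGF is the structural heart of the argument: it is the only place where independence and the hypothesis \eqref{prob-condition} are used.

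I would then convert this into a tail bound by Markov's inequality. For $\lambda>0$,
\[
\P\Bigl(\smallsum_k c_k\ell_k > \lambda\Bigr) \le e^{-\gamma\lambda}\,\mathbb{E}\bigl[e^{\gamma\sum_k c_k\ell_k}\bigr] \le e^{-\gamma\lambda + c\gamma^2\sigma^2},
\]
and optimizing in $\gamma>0$ (namely $\gamma=\lambda/(2c\sigma^2)$) produces the exponent $-\lambda^2/(4c\sigma^2)$; applying the same reasoning to $-\sum_k c_k\ell_k$ yields
\[
\P\Bigl(\bigl|\smallsum_k c_k\ell_k\bigr| > \lambda\Bigr)\le 2e^{-\lambda^2/(4c\sigma^2)}.
\]

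Finally I would integrate this tail bound. By the layer-cake formula,
\[
\Bigl\|\smallsum_k c_k\ell_k\Bigr\|_{L^\alpha_\omega}^\alpha = \alpha\int_0^\infty \lambda^{\alpha-1}\,\P\Bigl(\bigl|\smallsum_k c_k\ell_k\bigr|>\lambda\Bigr)\,d\lambda \le 2\alpha\int_0^\infty \lambda^{\alpha-1}e^{-\lambda^2/(4c\sigma^2)}\,d\lambda,
\]
and the substitution $u=\lambda^2/(4c\sigma^2)$ evaluates the integral in terms of the Gamma function, bounding the right-hand side by $\alpha(4c\sigma^2)^{\alpha/2}\Gamma(\tfrac\alpha2)$. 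Taking $\alpha$-th roots then reduces the claim to the elementary estimate $[\alpha\,\Gamma(\tfrac\alpha2)]^{1/\alpha}\lesssim\sqrt\alpha$, valid uniformly for $\alpha\ge2$. Since this is a classical Khintchine-type inequality there is no deep obstacle; the only step requiring genuine care is precisely this last one, where the $\sqrt\alpha$ growth must be extracted uniformly from the Gamma factor via Stirling's formula, while every other step is a direct computation.
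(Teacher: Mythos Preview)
Your argument is correct and is precisely the standard sub-Gaussian/Khintchine proof from \cite{BT}. Note that the present paper does not actually supply its own proof of this proposition; it simply imports the result as \cite[Lemma~3.1]{BT}, so there is nothing further to compare against.
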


The next lemma appears in \cite[Lemma~2.5]{LM}, which is in turn an adaptation of \cite[Lemma~4.5]{Tz}.

\begin{lemma}\label{L:prob} Suppose $F$ is a measurable function on a probability space $(\Omega,\A,\P)$. Suppose that there exist $C>0$, $A>0$, and $\alpha_0\geq 1$ such that for $\alpha\geq\alpha_0$, we have
\[
\| F\|_{L_\omega^\alpha(\Omega)} \leq C\sqrt{\alpha} A.
\]
Then there exists $C'=C'(C,\alpha_0)>0$ and $c=c(C,\alpha_0)>0$ such that
\[
\P(\omega\in\Omega:|F(\omega)|>\eta) \leq C' \exp\{-c\eta^2 A^{-2}\}.
\]
\end{lemma}

%%%%%%%%%%%%%%%%%%%
%%%%%%%%%%%%%%%%%%%
%%%%%%%%%%%%%%%%%%%
\section{Proof of Theorem~\ref{T}}\label{S:proof} 

This section contains the proof of Theorem~\ref{T}.  We first prove some improved space-time estimates for solutions to the linear Schr\"odinger equation with randomized data (Proposition~\ref{prop:linear}).  We then prove a deterministic existence of wave operators result (Proposition~\ref{prop:wo}).  These two results together will quickly imply Theorem~\ref{T}.

\subsection*{Space-time estimates with randomized data} Recall the exponents $(q,r)$ defined in Section~\ref{S:FS}.  We define
\[
\eps_0 := (\tfrac{d}{2}-\tfrac{d}{r}) - \tfrac{1}{q} = \tfrac{dp^2+p(d-2)-4}{2p(p+2)}
\]
and note that $\eps_0>0$ precisely when $p>p_0(d)$ (cf. \eqref{Strauss}).  This is another reason that we work above the Strauss exponent. 

\begin{proposition}\label{prop:linear} Let $T\geq 1$ and $u_+\in L^2$. Define $u_+^\omega$ as in Definition~\ref{randomization} and consider the set
\[
\Omega_{\eta,T} := \{\omega\in\Omega: \| e^{it\Delta}u_+^\omega\|_{L_t^q L_x^r((T,\infty)\times\R^d)}<\eta\}
\]
for some $\eta>0$.  There exists $C>0$ such that
\begin{equation}\label{prop:linear2}
\P(\Omega_{\eta,T}^c)\lesssim \exp\bigl\{-C \eta^2T^{2\eps_0} \|u_+\|_{L^2}^{-2}\bigr\}. 
\end{equation}
\end{proposition}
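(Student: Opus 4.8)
The plan is to bound the high moments $\bigl\|\,\|e^{it\Delta}u_+^\omega\|_{L_t^q L_x^r((T,\infty)\times\R^d)}\,\bigr\|_{L_\omega^\alpha(\Omega)}$ by $\sqrt{\alpha}\,A$ with $A\sim T^{-\eps_0}\|u_+\|_{L^2}$, valid for all $\alpha\geq\max\{q,r\}$, and then to invoke Lemma~\ref{L:prob} to convert this moment bound into the exponential tail estimate. The exponent $A^{-2}\sim T^{2\eps_0}\|u_+\|_{L^2}^{-2}$ produced by the lemma matches exactly the factor appearing in \eqref{prop:linear2}, so the entire argument reduces to establishing the moment bound.

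For the moment bound, first I would write $e^{it\Delta}u_+^\omega=\sum_k g_k(\omega)\,e^{it\Delta}(\psi_k u_+)$ and, since $\alpha$ may be taken larger than both $q$ and $r$, use Minkowski's inequality to pull the $L_\omega^\alpha$ norm inside the space-time norms:
\[
\bigl\|\,\|e^{it\Delta}u_+^\omega\|_{L_t^q L_x^r}\,\bigr\|_{L_\omega^\alpha}\lesssim\bigl\|\,\|e^{it\Delta}u_+^\omega\|_{L_\omega^\alpha}\,\bigr\|_{L_t^q L_x^r}.
\]
For each fixed $(t,x)$ the value $[e^{it\Delta}u_+^\omega](x)$ is a sum $\sum_k c_k g_k(\omega)$ with $c_k=[e^{it\Delta}(\psi_k u_+)](x)$, so the large deviation estimate (Proposition~\ref{prop:ld}) gives the pointwise bound $\|[e^{it\Delta}u_+^\omega](x)\|_{L_\omega^\alpha}\lesssim\sqrt{\alpha}\,\bigl(\sum_k|[e^{it\Delta}(\psi_k u_+)](x)|^2\bigr)^{1/2}$. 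It then remains to control the $L_t^q L_x^r$ norm of this square function by $T^{-\eps_0}\|u_+\|_{L^2}$.

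The heart of the matter is this deterministic square-function estimate, and here physical-space localization does the work. Since $r=p+2\geq2$, the triangle inequality in $L_x^{r/2}$ yields $\bigl\|(\sum_k|f_k|^2)^{1/2}\bigr\|_{L_x^r}\leq(\sum_k\|f_k\|_{L_x^r}^2)^{1/2}$, allowing me to estimate each $e^{it\Delta}(\psi_k u_+)$ separately. Applying the dispersive estimate \eqref{dispersive} gives $\|e^{it\Delta}(\psi_k u_+)\|_{L_x^r}\lesssim|t|^{-(\frac{d}{2}-\frac{d}{r})}\|\psi_k u_+\|_{L_x^{r'}}$, and because $\psi_k u_+$ is supported in a fixed-size ball about $k$, Hölder's inequality (using $r'\leq2$) upgrades this to $\|\psi_k u_+\|_{L_x^{r'}}\lesssim\|\psi_k u_+\|_{L_x^2}$. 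Summing in $k$ and using $\sum_k\|\psi_k u_+\|_{L_x^2}^2\sim\|u_+\|_{L^2}^2$ (the $\eps=0$ case of \eqref{equiv}) produces the clean pointwise-in-time bound $\bigl\|(\sum_k|e^{it\Delta}(\psi_k u_+)|^2)^{1/2}\bigr\|_{L_x^r}\lesssim|t|^{-(\frac{d}{2}-\frac{d}{r})}\|u_+\|_{L^2}$.

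Finally I would take the $L_t^q$ norm over $(T,\infty)$. The integral $\int_T^\infty t^{-q(\frac{d}{2}-\frac{d}{r})}\,dt$ converges precisely when $(\frac{d}{2}-\frac{d}{r})>\frac1q$, i.e.\ when $\eps_0>0$; this is exactly the requirement $p>p_0(d)$, and it is the only place the Strauss exponent enters. Evaluating the integral yields the factor $T^{\frac1q-(\frac{d}{2}-\frac{d}{r})}=T^{-\eps_0}$, completing the moment bound with $A\sim T^{-\eps_0}\|u_+\|_{L^2}$. The main obstacle is organizing the square-function step so that the physical-space support of each $\psi_k u_+$ can be exploited by the dispersive estimate; once the pieces are decoupled, the convergence of the time integral—and hence the gain $T^{-\eps_0}$—is governed entirely by the positivity of $\eps_0$.
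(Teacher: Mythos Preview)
Your proposal is correct and follows essentially the same approach as the paper: Minkowski to bring $L_\omega^\alpha$ inside the space-time norms, the large deviation estimate to produce the square function, Minkowski in $L_x^{r/2}$ (using $r\geq 2$) to decouple the pieces, the dispersive estimate together with H\"older on the unit-scale support of each $\psi_k$, and finally Lemma~\ref{L:prob}. The only difference is cosmetic ordering of the steps.
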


\begin{proof} Fix $\alpha>\max\{q,r\}$.  Changing the order of integration and using the large deviation estimate (Proposition~\ref{prop:ld}), the dispersive estimate \eqref{dispersive}, and H\"older's inequality, we estimate
\begin{align*}
\| e^{it\Delta} u_+^\omega\|_{L_\omega^\alpha(\Omega; L_t^q L_x^r((T,\infty)\times\R^d))} & \lesssim \biggl\|\,\biggl\| \sum_{k\in\Z^d} g_k(\omega)[e^{it\Delta}\psi_k u_+]\biggr\|_{L_\omega^\alpha}\biggr\|_{L_t^q L_x^r((T,\infty)\times\R^d)} \\
& \lesssim \sqrt{\alpha}\biggl\| \biggl(\sum_{k\in\Z^d} |e^{it\Delta}\psi_k u_+|^2 \biggr)^{\frac12}\biggr\|_{L_t^q L_x^r((T,\infty)\times\R^d)} \\
& \lesssim \sqrt{\alpha}\biggl\| \biggl(\sum_{k\in\Z^d}\| e^{it\Delta}\psi_k u_+\|_{L_x^r}^2\biggr)^{\frac12}\biggr\|_{L_t^q((T,\infty))} \\
& \lesssim \sqrt{\alpha}\| |t|^{-(\frac{d}{2}-\frac{d}{r})}\|_{L_t^q((T,\infty))} \biggl(\sum_{k\in\Z^d} \|\psi_k u_+\|_{L_x^{r'}}^2\biggr)^{\frac12} \\
& \lesssim \sqrt{\alpha}T^{-\eps_0}\biggl(\sum_{k\in\Z^d}\|\psi_k u_+\|_{L_x^2}^2\biggr)^{\frac12} \lesssim \sqrt{\alpha}T^{-\eps_0}\|u_+\|_{L_x^2}. 
\end{align*}
%To apply Proposition~\ref{prop:ld}, we note that $\sum_k |e^{it\Delta}\psi_k u_+|^2$ is finite almost everywhere, as can be seen by unitary of $e^{it\Delta}$ and \eqref{equiv}, for example. 
To pass from this estimate to \eqref{prop:linear2}, we now use Lemma~\ref{L:prob}.\end{proof}

\begin{remark} The use of H\"older's inequality in the argument above is akin to the `unit scale Bernstein estimate' in \cite{LM}. 
\end{remark}

In Section~\ref{S:remarks}, we discuss some conditions on $u_+$ that imply $L_t^q L_x^r$ bounds for $e^{it\Delta}u_+$ in the deterministic setting.

%%%%%%%%%%%%%%%%
\subsection*{Final-state problem} We next prove a deterministic result concerning existence of wave operators.  We recall the exponents defined in Section~\ref{S:FS}. 

\begin{proposition}\label{prop:wo} Let $d\geq 3$ and $p_0(d)<p<\tfrac{4}{d}$, with $p_0(d)$ as in \eqref{Strauss}.  There exists $\eta_0>0$ such that the following holds: If $\varphi\in L^2$ and $T$  is such that
\begin{equation}\label{smallness}
\| e^{it\Delta}\varphi\|_{L_t^q L_x^r((T,\infty)\times\R^d)} < \eta\qtq{for some}0<\eta<\eta_0,
\end{equation}
then there exists a unique global solution $u$ to \eqref{nls} satisfying
\begin{equation}\label{scatter}
\lim_{t\to\infty}\|u(t)-e^{it\Delta}\varphi\|_{L^2}=0. 
\end{equation}
\end{proposition}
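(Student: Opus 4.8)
The plan is to construct the solution by a fixed-point argument on the Duhamel integral equation formulated backwards from $t=+\infty$. Write the linear final state as $u_{\mathrm{lin}}(t):=e^{it\Delta}\varphi$, and seek a solution of the form $u = u_{\mathrm{lin}} + w$, where $w$ solves the integral equation
\begin{equation}\nonumber
w(t) = i\mu\int_t^\infty e^{i(t-s)\Delta}\bigl(|u_{\mathrm{lin}}+w|^p(u_{\mathrm{lin}}+w)\bigr)(s)\,ds.
\end{equation}
Integrating from $+\infty$ rather than from a finite time is what encodes the scattering condition \eqref{scatter}: if $w\in C_tL_x^2$ with $w(t)\to 0$ as $t\to\infty$, then $u=u_{\mathrm{lin}}+w$ automatically satisfies $\|u(t)-e^{it\Delta}\varphi\|_{L^2}=\|w(t)\|_{L^2}\to 0$.

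The natural space in which to run the contraction is, on the interval $(T,\infty)$, the intersection
\begin{equation}\nonumber
X = \{w: \|w\|_{L_t^qL_x^r} \le \rho,\quad \|w\|_{C_tL_x^2\cap L_t^aL_x^b}\le K\}
\end{equation}
for suitably chosen radii $\rho,K$, with the exponents $(q,r)$ and $(a,b)$ as defined in Section~\ref{S:FS}. The key point is that the map $\Phi(w)(t):= i\mu\int_t^\infty e^{i(t-s)\Delta}(|u_{\mathrm{lin}}+w|^p(u_{\mathrm{lin}}+w))\,ds$ can be estimated using precisely the two nonlinear estimates already prepared. First I would use the exotic inhomogeneous estimate \eqref{nle}, whose scaling-critical structure closes in the $L_t^qL_x^r$ norm alone: expanding $|u_{\mathrm{lin}}+w|^p(u_{\mathrm{lin}}+w)$ and applying \eqref{nle} gives $\|\Phi(w)\|_{L_t^qL_x^r}\lesssim (\|u_{\mathrm{lin}}\|_{L_t^qL_x^r}+\|w\|_{L_t^qL_x^r})^{p+1}\lesssim (\eta+\rho)^{p+1}$. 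The hypothesis \eqref{smallness} makes $\|u_{\mathrm{lin}}\|_{L_t^qL_x^r}<\eta$ small, so choosing $\rho\sim\eta$ and $\eta_0$ small enough makes $\Phi$ map the $L_t^qL_x^r$-ball into itself and be a contraction there. Then I would use the Strichartz-based estimate \eqref{nle2}, which bounds the $C_tL_x^2\cap L_t^aL_x^b$ norm by $\|u\|_{L_t^qL_x^r}^p\|u\|_{L_t^aL_x^b}$ — crucially only $p$ powers (not $p+1$) land in the small $L_t^qL_x^r$ norm, so the smallness of $\eta$ again controls the $C_tL_x^2$ piece, establishing $w\in C_tL_x^2$ with the required decay. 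Difference estimates for $\Phi(w_1)-\Phi(w_2)$ run identically, using the pointwise bound $\big||z_1|^pz_1-|z_2|^pz_2\big|\lesssim (|z_1|^p+|z_2|^p)|z_1-z_2|$ followed by Hölder, to obtain a contraction constant $<1$.

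Once $w\in X$ is obtained on $(T,\infty)$, the decay $\|w(t)\|_{L_x^2}\to 0$ gives scattering and produces a genuine solution $u=u_{\mathrm{lin}}+w\in C_tL_x^2$ on $(T,\infty)$; since $u(T)\in L^2$, the global well-posedness theory for the mass-subcritical equation (the local theory depending only on the $L^2$-norm, together with mass conservation) extends $u$ to a global solution. Uniqueness is the delicate point and is the step I expect to be the main obstacle. Within the contraction ball the fixed point is unique by construction, but to get uniqueness in the larger class of all solutions lying in $C_tL_x^2\cap L_t^qL_x^r((T',\infty))$ for some $T'$, I would argue by a continuity/absorption argument: given any two such solutions scattering to the same $\varphi$, their difference satisfies a Duhamel-from-infinity equation, and on a sufficiently late interval $(T'',\infty)$ the $L_t^qL_x^r$ norms of both solutions are small (by the dominated-convergence tail of a finite $L_t^qL_x^r$ norm), so the same difference estimate forces them to coincide there; one then propagates the agreement backwards by the standard $L^2$ local uniqueness. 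The subtlety is ensuring that any solution in the stated class — not merely one built by the contraction — has small critical norm on a late interval, which is exactly why uniqueness is asserted in the critical $L_t^qL_x^r$ class rather than in all of $C_tL_x^2$.
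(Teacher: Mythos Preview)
Your proposal is correct and follows essentially the same approach as the paper: a contraction mapping on the Duhamel-from-infinity operator, using the exotic inhomogeneous estimate \eqref{nle} to close in $L_t^qL_x^r$ and the Strichartz-based estimate \eqref{nle2} to recover $C_tL_x^2\cap L_t^aL_x^b$ control. The paper runs the contraction on $u$ directly (with metric $\|u-v\|_{L_t^aL_x^b}$) rather than on the remainder $w=u-u_{\mathrm{lin}}$, but this is cosmetic; your uniqueness discussion is in fact more explicit than what the paper writes out.
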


\begin{proof} We define
\begin{align*}
X = \bigl\{&u\in C_t L_x^2 \cap L_t^a L_x^b \cap L_t^q L_x^r((T,\infty)\times\R^d):\\  
&\|u\|_{C_t L_x^2\cap L_t^a L_x^b((T,\infty)\times\R^d)}\leq 2C\|\varphi\|_{L^2},\\ 
&\|u\|_{L_t^q L_x^r((T,\infty)\times\R^d)} \leq C\eta \bigr\},
\end{align*}
which is complete with respect to
\[
d(u,v) = \|u-v\|_{L_t^a L_x^b((T,\infty)\times\R^d)}. 
\]
Here $C$ encodes various constants appearing in the estimates below.  We will show that for $\eta$ sufficiently small, 
\[
[\Phi u](t) := e^{it\Delta}\varphi +i\mu\int_t^\infty e^{i(t-s)\Delta}\bigl(|u|^p u\bigr)(s)\,ds
\]
is a contraction from $X$ to $X$ with respect to $d(\cdot,\cdot)$.  In the following, we take all space-time norms over $(T,\infty)\times\R^d$.

First note that by Strichartz (Proposition~\ref{prop:strichartz}) and \eqref{smallness},
\[
\| e^{it\Delta}\varphi\|_{L_t^a L_x^b} \lesssim \|\varphi\|_{L^2},\quad \|e^{it\Delta}\varphi\|_{L_t^q L_x^r}<\eta. 
\] 
Now let $u\in X$.  Estimating as in \eqref{nle2} via Proposition~\ref{prop:strichartz}, we have
\begin{equation}\label{est-1}
\biggl\| \int_t^\infty e^{i(t-s)\Delta}\bigl(|u|^p u\bigr)(s)\,ds\biggr\|_{C_t L_x^2 \cap L_t^a L_x^b} \lesssim\|u\|^p_{L_t^q L_x^r} \|u\|_{L_t^a L_x^b} \lesssim \eta^p\|\varphi\|_{L^2}. 
\end{equation}
Similarly,  estimating as in \eqref{nle} via Proposition~\ref{prop:foschi}, 
\begin{align*}
\biggl\| \int_t^\infty e^{i(t-s)\Delta}\bigl(|u|^p u\bigr)(s)\,ds\biggr\|_{L_t^q L_x^r} \lesssim \|u\|_{L_t^q L_x^r}^{p+1}  \lesssim \eta^{p+1}. 
\end{align*}
Thus, for $\eta$ sufficiently small, $\Phi:X\to X$.  Furthermore, estimating similarly to \eqref{est-1} yields
\[
d(\Phi u,\Phi v) \lesssim \eta^p d(u,v),
\]
so that $\Phi$ is a contraction for $\eta$ small enough. 

Consequently, $\Phi$ has a unique fixed point $u\in X$.  It is now standard to verify that $u$ is a solution to \eqref{nls} on $(T,\infty)$ that satisfies \eqref{scatter}.  Furthermore, as described in the introduction, $u$ is automatically global-in-time.\end{proof}

We are now ready to complete the proof of Theorem~\ref{T}.

\begin{proof}[Proof of Theorem~\ref{T}] Fix $u_+\in L^2$ and define the randomization $u_+^\omega$ as in Definition~\ref{randomization}.  Now choose $\eta_0$ as in Proposition~\ref{prop:wo} and fix $0<\eta<\eta_0$.  Define the sets $\Omega_{\eta,T}$ as in Proposition~\ref{prop:linear}. Then, using Proposition~\ref{prop:linear}, we find that the set $\Omega_\eta := \cup_{T=1}^\infty \Omega_{\eta,T}$ has $\P(\Omega_\eta)=1$.  

Furthermore, for each $\omega\in\Omega_\eta$, we may apply Proposition~\ref{prop:wo} with $\varphi= u_+^\omega$ and $T$ chosen so that $\omega\in \Omega_{\eta,T}$.  In particular, we find that there exists a unique global solution $u$ that scatters to $u_+^\omega$.  This completes the proof of Theorem~\ref{T}.
\end{proof}

%%%%%%%%%%%%%%%%%%%%%%%
%%%%%%%%%%%%%%%%%%%%%%%
\section{Remarks}\label{S:remarks}

Using Proposition~\ref{prop:wo}, we can conclude existence of wave operators (for a restricted range of powers $p$) if $u_+\in L^2$ satisfies the additional condition
\begin{equation}\label{fourier}
\wh{u_+} \in L_\xi^{\rho_0}(\R^d),\qtq{where}\rho_0:=\tfrac{dp}{dp-2}. 
\end{equation}
Indeed, this is a consequence of the following Strichartz estimate (see \cite[Proposition~2.4]{Masaki}) and the montone convergence theorem:

\begin{proposition}[Fourier--Lebesgue Strichartz estimate\cite{Masaki}]\label{prop:masaki} Let $d\geq 3$ and take $(q,r)$ as in Section~\ref{S:FS}.  Then
\[
\|e^{it\Delta}\varphi\|_{L_t^q L_x^r(\R\times\R^d)} \lesssim \|\wh{\varphi}\|_{L_\xi^{\rho_0}(\R^d)},
\]
provided $p$ is chosen so that the following constraints are satisfied:
\begin{equation}\label{hat-restraint}
\tfrac{1}{q} \leq \tfrac{d}{d-2}\tfrac{1}{r}, \quad \tfrac{1}{q} < \tfrac{d+1}{2(d+3)}-\tfrac{d+1}{2}(\tfrac1r - \tfrac{d+1}{2(d+3)}).
\end{equation}
\end{proposition}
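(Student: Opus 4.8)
The estimate is the homogeneous Strichartz inequality for $e^{it\Delta}$ measured against the \emph{Fourier--Lebesgue} norm $\|\wh\varphi\|_{L_\xi^{\rho_0}}$ rather than a Sobolev norm; equivalently it is a Fourier extension estimate for the paraboloid $\{(\xi,|\xi|^2)\}$ with density in $L^{\rho_0}(\R^d)$. The first thing to observe is that, since $\rho_0=\tfrac{dp}{dp-2}$ and $s(q,r)=s_c=\tfrac d2-\tfrac2p$, the estimate is scale-invariant under \eqref{scaling}; indeed $\tfrac2q+\tfrac dr=\tfrac2p=\tfrac{d}{\rho_0'}$. The plan is therefore to pass to a Littlewood--Paley decomposition $\varphi=\sum_N P_N\varphi$ (dyadic $N$), estimate each frequency block, and then sum. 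For a single block the standard (Sobolev) Strichartz estimate gives $\|e^{it\Delta}P_N\varphi\|_{L_t^qL_x^r}\lesssim N^{s_c}\|P_N\varphi\|_{L^2}$, and by H\"older on the annulus $\{|\xi|\sim N\}$ (legitimate precisely because $\rho_0>2$) together with Plancherel one checks that $N^{s_c}\|P_N\varphi\|_{L^2}\lesssim\|\wh\varphi\|_{L_\xi^{\rho_0}(|\xi|\sim N)}$, with \emph{no} surviving power of $N$---this is exactly the scaling cancellation noted above. Thus each block already obeys the desired bound with the corresponding ``piece'' of the right-hand side.

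The entire difficulty is the summation in $N$. Writing $a_N:=\|\wh\varphi\|_{L_\xi^{\rho_0}(|\xi|\sim N)}$, the single-block bounds read $\|e^{it\Delta}P_N\varphi\|_{L_t^qL_x^r}\lesssim a_N$, while the target is $\|e^{it\Delta}\varphi\|_{L_t^qL_x^r}\lesssim\|a\|_{\ell^{\rho_0}}=\|\wh\varphi\|_{L_\xi^{\rho_0}}$. A crude triangle inequality only sums in $\ell^1$, and even a square-function argument only sums in $\ell^2$; since $\rho_0>2$ we have $\|a\|_{\ell^{\rho_0}}\le\|a\|_{\ell^2}$, so neither is enough. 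Recovering the stronger $\ell^{\rho_0}$-summation requires genuine orthogonality between widely separated frequency annuli under the Schr\"odinger flow. I would obtain this from a bilinear/$TT^*$ estimate: controlling the interaction of $e^{it\Delta}P_M\varphi$ and $e^{it\Delta}P_N\varphi$ for $M\ll N$, one exhibits decay in $M/N$---coming from the dispersive estimate \eqref{dispersive} applied to the time-convolution kernel on dyadic time blocks, balanced against an $L^2$/Plancherel bound---sufficient to run the Stein--Tomas summation and upgrade $\ell^2$ to $\ell^{\rho_0}$.

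In this scheme the two conditions in \eqref{hat-restraint} play distinct roles. The first, $\tfrac1q\le\tfrac{d}{d-2}\tfrac1r$, is the admissibility/Sobolev condition guaranteeing that the single-annulus estimate $\|e^{it\Delta}P_N\varphi\|_{L_t^qL_x^r}\lesssim N^{s_c}\|P_N\varphi\|_{L^2}$ is valid (it keeps $r$ below the $\dot H^1$-critical Sobolev exponent relative to $q$); this is also where the hypothesis $d\ge3$ enters. The second is the Stein--Tomas restriction line for the paraboloid, and its \emph{strict} inequality supplies the $\eps$ of room---a geometric gain in the annulus separation $M/N$---that makes the orthogonal summation converge.

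The main obstacle is exactly this last point: establishing the bilinear/restriction estimate across dyadic frequency scales in the sharp range \eqref{hat-restraint}, that is, proving the frequency blocks can be summed in $\ell^{\rho_0}$ with $\rho_0>2$ rather than merely in $\ell^2$. The per-block analysis is scale-critical and hence offers no power of $N$ to sum geometrically, so all of the content lies in the orthogonality, not in any elementary interpolation of the trivial ($\rho=1$) and $L^2$ ($\rho=2$) endpoints, which would only reach $\rho\le2$. Once the estimate is in hand, the application recorded above---an $L_t^qL_x^r$ bound for $e^{it\Delta}u_+$ on $(T,\infty)$ whenever $\wh{u_+}\in L_\xi^{\rho_0}$, feeding into Proposition~\ref{prop:wo}---follows by restricting to $t>T$ and invoking the monotone convergence theorem.
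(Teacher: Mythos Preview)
The paper does not prove this proposition: it is quoted from \cite{Masaki}, and the only comment offered is that ``this estimate is proven using Fourier restriction estimates, which are ultimately the source of the constraints \eqref{hat-restraint}.'' Your identification of the inequality as a mixed-norm Fourier extension estimate for the paraboloid is correct and is consistent with that remark, so at the level of philosophy you are aligned with what the paper says.

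There is, however, a genuine gap in your outline. You assert that the single-block bound
\[
\|e^{it\Delta}P_N\varphi\|_{L_t^qL_x^r}\lesssim N^{s_c}\|P_N\varphi\|_{L^2}
\]
follows from ``the standard (Sobolev) Strichartz estimate.'' It does not. Here $s_c<0$, so by rescaling this is equivalent to the unit-scale bound $\|e^{it\Delta}P_1\varphi\|_{L_t^qL_x^r}\lesssim\|P_1\varphi\|_{L^2}$ with $(q,r)$ strictly \emph{below} the admissible line, i.e.\ $\tfrac{2}{q}+\tfrac{d}{r}>\tfrac{d}{2}$. Fixing $r$, the admissible time exponent $q_0$ satisfies $q_0>q$, and one cannot pass from $L_t^{q_0}$ to $L_t^q$ over all of $\R$ by H\"older; fixing $q$, the admissible space exponent $r_0$ satisfies $r_0>r$, and Bernstein for a frequency-localized function only raises the Lebesgue exponent, never lowers it. Thus the unit-scale inequality is \emph{already} an $L^2\to L_t^qL_x^r$ extension estimate for a compact piece of the paraboloid---precisely a Stein--Tomas type restriction bound---and this is where the second constraint in \eqref{hat-restraint} enters. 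Your assignment of roles is therefore inverted: the restriction content is needed at the block level, not merely to upgrade $\ell^2$-summation to $\ell^{\rho_0}$ afterwards. Once you acknowledge that the per-block input is itself a restriction estimate rather than something ``free,'' the Littlewood--Paley scheme you describe can be made to work, but as written the sketch hides the essential step behind an appeal to standard Strichartz that is not valid for negative $s_c$.
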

This estimate is proven using Fourier restriction estimates, which are ultimately the source of the constraints \eqref{hat-restraint}.  Note that \eqref{hat-restraint} is always satisfied in a neighborhood of $p=\frac4d$, although not typically in our full range of interest $p_0(d)<p<\tfrac{4}{d}$.

One also has existence of wave operators if $u_+\in L^2$ satisfies the weighted assumption
\begin{equation}\label{weighted}
|x|^{|s_c|}u_+\in L^2(\R^d),\quad s_c = \tfrac{d}{2}-\tfrac{2}{p},
\end{equation}
as is already well-known in the literature (see e.g. \cite{Mas2, NakanishiA..., Nakanishi}).  In this case, one does not need to use exotic inhomogeneous estimates; one can instead work with the quantity $e^{it\Delta}|x|^{|s_c|}e^{-it\Delta}u$ in admissible (Lorentz-modified) Strichartz spaces.  

The weighted assumption in \eqref{weighted} is a strictly stronger assumption than the `Fourier--Lebesgue regularity' in \eqref{fourier}; indeed, by Sobolev embedding and Plancherel, one has
\begin{equation}\label{embedding}
\| \wh{u_+}\|_{L_\xi^{\rho_0}} \lesssim \| |\nabla|^{|s_c|}\wh{u_+}\|_{L_\xi^2} \sim \||x|^{|s_c|}u_+\|_{L_x^2}. 
\end{equation}
We failed to find a simple proof of boundedness of $e^{it\Delta}u_+$ in the particular space $L_t^q L_x^r$ under the assumption \eqref{weighted} in the range of interest $p_0(d)<p<\tfrac{4}{d}$.  Of course, whenever \eqref{hat-restraint} is satisfied, boundedness follows from Proposition~\ref{prop:masaki} and \eqref{embedding}.

Finally, returning to the discussion in Remark~\ref{remark}, we note that while the randomization in Definition~\ref{randomization} does not yield weighted bounds, it does imply additional Fourier--Lebesgue regularity almost surely.
\begin{proposition}[Fourier--Lebesgue regularity]\label{flr}  Let $u_+\in L^2$ and $\rho\in(2,\infty)$. Define $u_+^\omega$ as in Definition~\ref{randomization} and consider the set
\[
\Omega_{M,\rho} := \{\omega\in\Omega: \| \wh{u_+^\omega}\|_{L_\xi^\rho} \leq M\}
\]
for $M>0$.  There exists $c=c(\rho)>0$ such that
\[
\P(\Omega_{M,\rho}^c) \lesssim_\rho \exp\{-c M^2\|u_+\|_{L^2}^{-2}\}. 
\]
Consequently, for any $\frac{2}{d}<p<\tfrac{4}{d}$, we have $\wh{u_+^\omega}\in L_\xi^{\rho_0}$ almost surely.
\end{proposition}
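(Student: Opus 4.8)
The plan is to follow the same template as the proof of Proposition~\ref{prop:linear}: first establish a moment bound of the form $\|\wh{u_+^\omega}\|_{L_\omega^\alpha(\Omega;L_\xi^\rho)}\lesssim \sqrt\alpha\,\|u_+\|_{L^2}$ valid for all large $\alpha$, and then convert it into the desired Gaussian tail estimate by a direct application of Lemma~\ref{L:prob} (with $F=\|\wh{u_+^\omega}\|_{L_\xi^\rho}$ and $A=\|u_+\|_{L^2}$). The $\rho$-dependence of $c$ will enter only through the threshold $\alpha_0$ below. The stated almost-sure consequence then follows by letting $M\to\infty$ along the sets $\Omega_{M,\rho}$, whose union is $\{\omega:\|\wh{u_+^\omega}\|_{L_\xi^\rho}<\infty\}$; one only needs to note that for $\tfrac2d<p<\tfrac4d$ the exponent $\rho_0=\tfrac{dp}{dp-2}$ indeed lies in $(2,\infty)$, so that the estimate applies with $\rho=\rho_0$.

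To prove the moment bound, I fix $\alpha\geq\alpha_0:=\max\{2,\rho\}$ and write $\wh{u_+^\omega}(\xi)=\sum_k g_k(\omega)\wh{\psi_k u_+}(\xi)$. Since $\alpha\geq\rho$, Minkowski's inequality lets me exchange the $L_\omega^\alpha$ and $L_\xi^\rho$ norms, placing $L_\omega^\alpha$ innermost. Applying the large deviation estimate (Proposition~\ref{prop:ld}) pointwise in $\xi$ then gives
\[
\|\wh{u_+^\omega}\|_{L_\omega^\alpha(\Omega;L_\xi^\rho)}\lesssim \sqrt\alpha\,\Bigl\|\bigl(\smallsum_k |\wh{\psi_k u_+}(\xi)|^2\bigr)^{\frac12}\Bigr\|_{L_\xi^\rho}.
\]
Thus everything reduces to the deterministic square-function estimate
\[
\Bigl\|\bigl(\smallsum_k |\wh{\psi_k u_+}|^2\bigr)^{\frac12}\Bigr\|_{L_\xi^\rho}\lesssim \|u_+\|_{L^2},
\]
which I expect to be the crux of the argument.

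I would establish this estimate by interpolating between the endpoints $\rho=2$ and $\rho=\infty$. At $\rho=2$, Plancherel together with the partition-of-unity equivalence gives $\sum_k\|\wh{\psi_k u_+}\|_{L_\xi^2}^2=\sum_k\|\psi_k u_+\|_{L_x^2}^2\sim\|u_+\|_{L^2}^2$, using that $\sum_k\psi_k^2\sim 1$ by the finite overlap of the $\psi_k$. At $\rho=\infty$, the key structural fact is that each $\psi_k u_+$ is supported in a fixed-size ball, so Cauchy--Schwarz yields $|\wh{\psi_k u_+}(\xi)|\lesssim\|\psi_k u_+\|_{L_x^2}$ uniformly in $\xi$, whence $\sum_k|\wh{\psi_k u_+}(\xi)|^2\lesssim\|u_+\|_{L^2}^2$. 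Since the square function therefore lies in $L_\xi^2\cap L_\xi^\infty$ with both norms controlled by $\|u_+\|_{L^2}$, the elementary bound $\|g\|_{L^\rho}\leq\|g\|_{L^2}^{2/\rho}\|g\|_{L^\infty}^{1-2/\rho}$ closes the estimate for every $\rho\in[2,\infty]$. The only real obstacle is packaging the compact-support and finite-overlap properties of the partition $\{\psi_k\}$ cleanly at the two endpoints; once those are in hand, the interpolation and the passage to tail bounds via Lemma~\ref{L:prob} are routine.
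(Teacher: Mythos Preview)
Your proof is correct and follows the same overall template as the paper: obtain the moment bound via Minkowski and the large deviation estimate, then invoke Lemma~\ref{L:prob}. The only difference is in how you handle the deterministic square-function bound
\[
\Bigl\|\bigl(\smallsum_k |\wh{\psi_k u_+}|^2\bigr)^{1/2}\Bigr\|_{L_\xi^\rho}\lesssim \|u_+\|_{L^2}.
\]
The paper proceeds directly: since $\rho>2$, Minkowski swaps the $\ell_k^2$ and $L_\xi^\rho$ norms, then Hausdorff--Young gives $\|\wh{\psi_k u_+}\|_{L_\xi^\rho}\lesssim\|\psi_k u_+\|_{L_x^{\rho'}}$, and finally H\"older on the unit-scale support yields $\|\psi_k u_+\|_{L_x^{\rho'}}\lesssim\|\psi_k u_+\|_{L_x^2}$. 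Your interpolation between the $\rho=2$ and $\rho=\infty$ endpoints is an equally valid (and arguably more elementary) alternative that avoids Hausdorff--Young altogether; both arguments ultimately rest on the same structural fact, namely the compact support of each $\psi_k$.
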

\begin{proof}  The argument is similar to the proof of Proposition~\ref{prop:linear}. We fix $\alpha>\rho$.  We change the order of integration and use the large deviation estimate (Proposition~\ref{prop:ld}), the Hausdorff--Young inequality, and H\"older's inequality to estimate
\begin{align*}
\| \wh{u_+^\omega}\|_{L_\omega^\alpha L_\xi^\rho}     \lesssim \biggl\| \,\biggl\| \sum_{k\in\Z^d} g_k(\omega)\wh{\psi_k u_+}\biggr\|_{L_\omega^\alpha} \biggr\|_{L_\xi^\rho} & \lesssim \sqrt{\alpha} \biggl\|\biggl(\sum_{k\in\Z^d}\bigl| \wh{\psi_k u_+}\bigr|^2 \biggr)^{\frac12}\biggr\|_{L_\xi^\rho} \\
& \lesssim \sqrt{\alpha}\biggl(\sum_{k\in\Z^d} \| \wh{\psi_k u_+}\|_{L_\xi^\rho}^2 \biggr)^{\frac12} \\
& \lesssim  \sqrt{\alpha}\biggl(\sum_{k\in\Z^d} \|\psi_k u_+\|_{L_x^{\rho'}}^2\biggr)^{\frac12} \\
& \lesssim  \sqrt{\alpha}\biggl(\sum_{k\in\Z^d}\|\psi_k u_+\|_{L_x^2}^2\biggr)^{\frac12}\lesssim  \sqrt{\alpha}\|u_+\|_{L_x^2}. 
\end{align*}
Thus the desired estimate follows from Lemma~\ref{L:prob}.  

To get the final conclusion we note that $\Omega_0:=\cup_{M=1}^\infty\Omega_{M,\rho_0}$ satisfies $\P(\Omega_0)=1$ and $\wh{u_+^\omega}\in L_\xi^{\rho_0}$ for all $\omega\in\Omega_0$. 
\end{proof}

\end{document}